\def\<{\langle}
\def\>{\rangle}
\renewcommand{\leq}{\leqslant}
\renewcommand{\geq}{\geqslant}
\renewcommand{\le}{\leqslant}
\renewcommand{\ge}{\geqslant}
\def\EE{{\mathcal E}}
\def\FF{{\mathcal F}}
\def\LL{{\mathcal L}}
\newcommand{\D}{\mathscr{D}}
\newcommand{\E}{\mathcal{E}}
\newcommand{\F}{\mathscr{F}}
\newcommand{\HH}{\mathcal{H}}
\definecolor{mno}{rgb}{0.5,0.1,0.5}
\newcommand{\R}{\mathds R}
\newcommand{\e}{\varepsilon}
\newcommand{\Pp}{\mathds P}
\newcommand{\Ee}{\mathds E}
\newcommand{\w}{\omega}
\newtheorem{theorem}{Theorem}[section]
\newtheorem{proposition}[theorem]{Proposition}
\theoremstyle{definition}
\newtheorem{remark}[theorem]{Remark}
\newtheorem{assumption}[theorem]{Assumption}
\numberwithin{equation}{section}
\begin{document}
\allowdisplaybreaks
\title[Stochastic homogenization of
stable-like processes with divergence free drifts] {\bfseries Stochastic homogenization of stable-like processes with divergence free drifts}
\author{Xin Chen \quad \hbox{and}\quad Kun Yin}

\date{}

\maketitle

\begin{abstract}
In this paper we will study homogenization of
 stable-like processes with divergence-free drifts in
ergodic environments. In particular, neither the drifts nor the
stream functions are required to be bounded.

\medskip

\noindent\textbf{Keywords:} homogenization;
ergodic random environments; stable-like operator

\medskip

\noindent \textbf{MSC 2010:} 60G51; 60G52; 60J25; 60J75.
\end{abstract}
\allowdisplaybreaks

\section{Introduction and results}\label{section1}

\subsection{Background}

Homogenization theory aims to provide rigorous macroscopic characterizations of microscopically heterogeneous media. Papanicolaou and Varadhan
\cite{PV}, Kozlov \cite{Koz} independently proved the homogenization for the (parabolic or elliptic) equations corresponding to
second order elliptic operators with divergence form which have random coefficients in ergodic environments for the first time. They introduced
an important method usually called "seen from particle" to construct the correctors in ergodic environments
which satisfy the properties of everywhere sublinearty. This
was the crucial point to establish enhancements of the diffusive coefficients in limit equations.
Later on a lot of homogenization problems  were investigated  for various (linear and non-linear) elliptic and parabolic differential equations as well as system of equations
in ergodic environments. See also \cite{BMW,CD,KLO,ZP} and reference therein about
the homogenization for symmetric processes in ergodic environments whose infinitesimal generators
were second order differential operators with divergence forms.

Osada \cite{O} studied the homogenization of stochastic turbulence flows in ergodic environments, which 
under the assumption that both the drifts and associated stream functions
(which were the function $\tilde H$ in Assumption \ref{a1-2} below) were bounded. The method of corrector was also used
in \cite{O} to show the enhanced variance for limiting Brownian motion.
After \cite{O}, there were various
different results concerning the homogenization of stochastic turbulence flows with unbounded drifts and stream functions.
Oelschl\"ager \cite{Oe} proved
the convergence in Wasserstain metric for associated scaled processes. The quenched invariance principle
was established by Fannjiang and Komorowski \cite{FK} which used maximal inequalities for
associated Poisson equations with only the moments(not the uniform bounds) of stream functions.
The result in \cite{FK} has been extended by Fannjiang and Komorowski \cite{FK1} to the case in time-space ergodic environments.
Fannjiang and Papanicolaou \cite{FP} investigated the convergence of associated Fokker-Planck equations and introduced
some conditions on the correlation functions of drifts for existence of stream functions, see also Komorowski and Olla \cite{KO}
for the case with time-dependent drifts. Fehrman \cite{Fe} also studied the time-dependent model, where
where
mixing properties of time variables may induce different terms in limit processes. We refer the reader to monograph
\cite{KLO} for detailed introduction on this subject.

Recently, there were much important progress on the homogenization for stochastic turbulence flows
whose drifts were critically correlated Gaussian fields. For this case, stream functions may not exist, and
the idea of quantitative homogenization, as well as renormalization technique were applied
to study several problems. Chatzigeorgiou, Morfe, Otto, and Wang \cite{CMOW} obtained
the large time moment estimates for critically correlated stochastic turbulence flows.
Armstrong, Bou-Rabee and Kuusi \cite{ABK} have characterized the quenched invariance principle, where
the scaling orders were not diffusive and limit processes were Brownian motions whose coefficients
only depend on the drifts(did not depend on original driven Brownian motion). Otto and Wagner \cite{OW}
investigated the intermittent behaviors for critically correlated stochastic turbulence flows,
and Morfe, Otto and Wagner \cite{MOW} studied the multi-scale homogenization for this model.

Now we give a brief review of some recent work on
homogenization problems
for non-local operators.
Piatnitski and Zhizhina \cite{PZ}, Flegel, Heida and Slowik \cite{FHS}
studied homogenization problem
for a class of non-local operators in ergodic environments whose jump kernels were $L^2$-integrable, and see
\cite{PZ1} for corresponding result in time-space ergodic environments.
When the jump kernels are not $L^2$-integrable, associated process has heavy tail property and
the scaling orders and limiting processes are no longer diffusive.
Chen, Kim and Kumagai \cite{CKK} proved the Mosco convergence of non-local Dirichlet forms associated with
symmetric stable-like random walks in independent random
conductance model, where limiting processes
were $\alpha$-stable L\'evy processes. Furthermore, Chen, Kumagai and Wang \cite{CKW} established quenched invariance
principle for symmetric stable-like random walks in independent random conductance model.
Based on a new ABP type estimate, Schwab \cite{S} studied
homogenization of a class of fully
non-linear integral-differential equations in ergodic environments.
Kassmann, Piatnitski and Zhizhina \cite{KPZ} investigated
homogenization of a class of symmetric stable-like processes in
 ergodic environments whose jump kernels were uniformly elliptic and
 having product form. See also Flegel and Heida \cite{FH} for corresponding results about
 the discrete counterpart of this model.
 By proving a new criterion for pre-compactness in $L^1$ space,
 Chen, Chen, Kumagai and Wang \cite{CCKW1} established homogenization for symmetric
 stable-like processes with more general forms, whose jump kernels could be degenerate, unbounded,
 and singular, see also \cite{CCKW} for another special example. But by our knowledge, there are not
 any results about homogenization of stochastic turbulence flows in ergodic environments driven by
 jump processes,
 \emph{the main purpose of this paper is to study  stochastic homogenization for stable-like processes with divergence-free drifts in
ergodic environments.}



\subsection{Framework}
 Let $d\geq 1$ and suppose that $(\Omega, \mathcal{F}, \Pp)$ is a probability space
 endowed with a measurable group of transformations $\tau_x:\Omega \to \Omega,\ x\in \R^d$ such that
\begin{itemize}
\item [(a)] $\tau_0={\rm id}$, where ${\rm id}$ denotes the identity map on $\Omega$;

\item [(b)] $\tau_x \circ \tau_x=\tau_{x+y}$  for every $x, y\in \R^d$.
\end{itemize}
We also assume that $\{\tau_x\}_{x\in \R^d}$ is stationary and ergodic, i.e., the following properties hold.
\begin{itemize}
\item[(i)]  $\Pp(\tau_x A)=\Pp(A)$ for all $A\in \mathcal{F}$ and $x\in \R^d$(stationary condition);

\item[(ii)] if $A\in \mathcal{F}$ and $\tau_xA=A$ for all $x\in \R^d$, then $\Pp(A)\in \{0,1\}$(ergodic condition);

\item[(iii)] the function $(x,\w)\mapsto \tau_x \w$ is $\mathscr{B}(\R^d)\times \mathcal{F}$-measurable(measurable condition).
\end{itemize}
Let $\HH:=L^2(\Omega;\Pp)$. With this group of transformations $\{\tau_x\}_{x\in \R^d}$, we define the translation operator $T_x: \HH\to \HH$
(along direction $x\in \R^d$ in environments) and derivative operator $D_k$, $1\le k \le d$ as follows.
\begin{equation*}
T_x F(\w):=F(\tau_{-x} \w),\ x\in \R^d,\ F\in \HH,
\end{equation*}
\begin{equation*}
D_j F(\w):=\lim_{\e\downarrow 0}\frac{T_{\e e_j}F(\w)-F(\w)}{\e},\ F\in \HH,\ 1\le j \le d
\end{equation*}
where $\{e_j\}_{1\le j \le d}$ is the canonical orthonormal basis of $\R^d$. By such definition and direct computation we have
(if $D_jF$ exists)
\begin{equation*}
D_j F(\tau_x \w)=-\frac{\partial\tilde F(\cdot;\w) }{\partial x_j}(x),\ x\in \R^d,
\end{equation*}
where $\tilde F(x;\w):=F(\tau_x \w)$.

We set
\begin{equation*}
\D:=\{F\in L^\infty(\Omega;\Pp); D_jF\ {\rm exists\ and}\ D_j F,\ D_k(D_j F)\in L^\infty(\Omega;\Pp)\ {\rm for\ every}\ 1\le j,k \le d\}.
\end{equation*}
Then by stationary property it is easy to verify the integration by parts formula for $D_j$
\begin{equation}\label{e1-2a}
\Ee[D_j F G]=-\Ee[F D_j G],\ \ F,G\in \D.
\end{equation}
Since $T_x$ is a strongly continuous unitary semi-group on $\HH$, we have a spectral resolution of
$T_x$ as follows
\begin{equation}\label{e1-1a}
T_x=\int_{\R^d} e^{i \langle \xi, x\rangle }U(d\xi),\ x\in \R^d,
\end{equation}
where $U(d\xi)$ is associated projection(on $\HH$) valued measure, and
$\langle \xi,x\rangle$ denotes the canonical inner product on $\R^d$. Based on the expression \eqref{e1-1a} we have
\begin{equation*}
\|F\|_{\HH_1}:=\Ee[F^2]+\sum_{j=1}^d \Ee[|D_j F|^2]=\int_{\R^d}\left(1+|\xi|^2\right)\Ee[U(d\xi)F\cdot F].
\end{equation*}
According to integration by parts formula \eqref{e1-2a}, we define $\HH_1:=\overline{\D}^{\|\cdot\|_{\HH_1}}$ as
the closed extension of $\D$ under $\|\cdot\|_{\HH_1}$ norm defined above. By this we know immediately $D_j F$
is still well defined for every $F\in \HH_1$ and the integration by parts formula \eqref{e1-2a} is true for
every $F,G\in \HH_1$.

Meanwhile for every $\beta\in (0,2)$, let
\begin{equation*}
\begin{split}
\LL_\beta F(\w)&:={\rm p.v.}\int_{\R^d}\left(F(\tau_{-z} \w)-F(\w)\right)\frac{1}{|z|^{d+\beta}}dz\\
&=\lim_{\e \downarrow 0}\int_{\{z\in \R^d;|z|>\e\}}\left(F(\tau_{-z} \w)-F(\w)-\sum_{j=1}^d D_jF(\w)z_j\right)\frac{1}{|z|^{d+\beta}}dz,
\ F\in \D.
\end{split}
\end{equation*}
where we use the notation $z=(z_1,\cdots,z_d)$ with $z_j$ denoting the $j$-th coordinate for $z\in \R^d$. By
mean-value inequality it is not difficult to verify that $\LL_\beta F$ is well defined  and $\LL_\beta F(\w)\in L^\infty(\Omega;\Pp)$ for every $F\in \D$.

Recall that $\Delta^{\beta/2}f(x):={\rm p.v.}\int_{\R^d}\left(f(x+z)-f(x)\right)\frac{C_0(d,\beta)}{|z|^{d+\beta}}dz$, $f\in C_b^2(\R^d)$
is the fractional Laplacian operator with order $\beta/2$, then by direct computation it is easy to see that
\begin{equation}\label{e1-3a}
\LL_\beta F(\tau_x \w)=C_0(d,\beta)^{-1}\Delta^{\beta/2}\tilde F(\cdot;\w)(x),\ \ x\in \R^d,\ F \in \D,
\end{equation}
where $\tilde F(x;\w):=F(\tau_x \w)$.

Applying the stationary property of $\tau_x$ again we have
\begin{equation}\label{e1-4}
\Ee[\LL_{\beta}F \cdot G]=\Ee[F\cdot \LL_{\beta}G]=-\frac{1}{2}\Ee\left[\int_{\R^d}\frac{(F(\tau_{-z}\w)-F(\w))(G(\tau_{-z}\w)-G(\w))}{|z|^{d+\beta}}dz\right]
,\ F,G\in \D.
\end{equation}

By \eqref{e1-1a} again we have
\begin{equation*}
\begin{split}
\|F\|_{\HH_{\beta/2}}&:=-2\Ee[\LL_{\beta}F \cdot F]+\Ee[F^2]=
\Ee\left[\int_{\R^d}\frac{(F(\tau_{-z}\w)-F(\w))^2}{|z|^{d+\beta}}dz\right]+\Ee[F^2]\\
&=-2\Ee\left[\left(\int_{\R^d}\frac{T_{z}F(\w)-F(\w)}{|z|^{d+\beta}}dz\right)\cdot F(\w)\right]+\Ee[F^2]\\
&=-2\Ee\left[\left(\int_{\R^d}\frac{e^{i\langle \xi, z\rangle}-1}{|z|^{d+\beta}}dz\right)\left(U(d\xi)F\cdot F\right)\right]+\Ee[F^2]\\
&=\int_{\R^d}\left(1+C_1(d,\beta)|\xi|^\beta\right)\Ee[U(d\xi)F\cdot F],\ F\in \D,
\end{split}
\end{equation*}
where we use the fact that
\begin{align*}
-2\int_{\R^d}\frac{e^{i\langle \xi, z\rangle}-1}{|z|^{d+\beta}}dz=C_1(d,\beta)|\xi|^\beta,\ \ \xi\in \R^d.
\end{align*}
for a positive constant $C_1(d,\beta)$.

Based on integration by parts formula \eqref{e1-4}, we can define $\HH_{\beta/2}:=\overline{\D}^{\|\cdot\|_{\HH_{\beta/2}}}$ as
the closed extension of $\D$ under $\|\cdot\|_{\HH_{\beta/2}}$ norm defined above.
Then $\left(\HH_{\beta/2},\|\cdot\|_{\beta/2}\right)$ is a Hilbert space. By the expression of $\|\cdot\|_{\HH_{\beta/2}}$ we know immediately that
\begin{equation*}
\HH_{1}\subset \HH_{\beta/2},\ \|F\|_{\HH_{\beta/2}}\le c_0\|F\|_{\HH_1},\ \ \forall\ \beta\in (0,2).
\end{equation*}

Moreover, as before for every $F\in \HH_{\beta/2}$, set $\tilde F(x;\w):=F(\tau_x \w)$, $x\in \R^d$, $\w\in \Omega$.
Applying standard approximation procedure we have $\tilde F(\cdot,\w)\in W_{loc}^{\beta/2}(\R^d)$ and
for every bounded subset $O\subset \R^d$,
\begin{equation}\label{e1-5a}
\begin{split}
\quad \Ee\left[\int_O \int_{\R^d}\frac{(\tilde F(x+z;\w)-\tilde F(x;\w))^2}{|z|^{d+\beta}}dzdx\right]
&=\int_O\Ee\left[\int_{\R^d}\frac{(F(\tau_z (\tau_x\w))-F(\tau_x\w))^2}{|z|^{d+\beta}}dz\right]dx\\
&=\int_O \Ee\left[\int_{\R^d}\frac{(F(\tau_z \w)-F(\w))^2}{|z|^{d+\beta}}dz\right]dx=\|F\|_{\HH_{\beta/2}}\cdot |O|,
\end{split}
\end{equation}
where the third equality is due to stationary property of $\tau_x$ and the change of variable $\tilde z=-z$. We refer the reader
to \cite{PV} for detailed introduction on the contents above.

\medskip

 Throughout this paper, let $\alpha\in (0,2)$ and assume that
\begin{assumption}\label{a1-1}
$\tilde \mu: \Omega\rightarrow (0,\infty)$ is
 a positive random variable such that
\begin{align*}
\inf_{\w\in \Omega}\tilde \mu(\w)\ge \theta_0>0,\ \ \ \Ee[|\tilde \mu|^2]<+\infty
\end{align*}
for some positive constant $\theta_0$.
\end{assumption}

We define $\mu:\R^d\times \Omega \to [0,\infty)$ as
\begin{equation*}
\mu(x;\w):=\tilde \mu(\tau_x \w),\ \ x\in \R^d,\ \w\in \Omega.
\end{equation*}
And we make the following assumption on $\mu$

\begin{equation}\label{e1-1}
x\mapsto \int (1\wedge |z|^2)\frac{\mu(x;\w)\mu(x+z;\w)}{|z|^{d+\alpha}}\,dz \in L^1_{loc}(\R^d;dx),
\quad \mbox{$\Pp$-a.e.}
\end{equation}

For a.s. $\w\in \Omega$, we define a (symmetric) bilinear form $\EE_0^\w$ as follows
\begin{equation}\label{e1-2}
\EE_0^\w(f,g):=\frac{1}{2}
\iint_{\R^d\times\R^d\setminus \Delta}
(f(x)-f(y))(g(x)-g(y))\frac{\mu(x;\w)\mu(y;\w)}{|x-y|^{d+\alpha}}
\,dx\,dy,\quad f,g\in C_c^\infty(\R^d),
\end{equation}
where $\Delta:=\{(x,x)\in \R^d\}$ is the diagonal of $\R^d \times \R^d$, and $C_c^\infty(\R^d)$ denotes the collection
of all $C^\infty$ functions on $\R^d$ with compact supports. In particular,
under assumption
 \eqref{e1-1}, it is easy to verify that $\EE_0^\w(f,f)<\infty$ for all $f\in C_c^1 (\R^d)$.
It is well known that (see e.g. \cite{FOT}) $(\EE_0^\w,\FF_0^\w)$ is a symmetric regular
Dirichlet form on $L^2(\R^d; dx)$ with $\FF_0^\w$
 being the closure of $C_c^1(\R^d)$ with respect to the norm $\|\cdot\|_{\E_0^\w}:=$ $(\E_0^\w(\cdot,\cdot)+\|\cdot\|_{L^2(\R^d; dx)}^2)^{1/2}$.
In particular, $\FF^\w_0$ is a Hilbert space endowed with the norm $\|\cdot\|_{\EE_0^\w}$.
Similarly, for every $\e\in (0,1)$, we can define the scaling Dirichlet form $(\E^{\e,\w}_{0},\FF_0^{\e,\w})$ as follows.
\begin{equation}\label{e1-3}
\EE_0^{\e,\w}(f,g):=\frac{1}{2}
\iint_{\R^d\times\R^d\setminus \Delta}
(f(x)-f(y))(g(x)-g(y))\frac{\mu\left(\frac{x}{\e};\w\right)\mu\left(\frac{y}{\e};\w\right)}{|x-y|^{d+\alpha}}
\,dx\,dy,\quad f,g\in \FF_0^{\e,\w},
\end{equation}
and $\FF_0^{\e,\w}$ is a Hilbert space endowed with the norm $\|\cdot\|_{\EE_0^{\e,\w}}:=$
$(\E_0^{\e,\w}(\cdot,\cdot)+\|\cdot\|_{L^2(\R^d; dx)}^2)^{1/2}$.

Let $\{X_t^\w\}_{t\ge 0}$ be the Hunt process associated with $(\EE_0^\w,\FF_0^\w)$(note that $(\EE_0^\w,\FF_0^\w)$ is regular), then
the scaled process $\{X_t^{\e,\w}\}_{t\ge 0}=\{\e X^{\w}_{\e^{-\alpha}t}\}_{t\ge 0}$ is the Hunt process related to
$(\EE_0^{\e,\w},\FF_0^{\e,\w})$.

Meanwhile, we assume that
\begin{assumption}\label{a1-2}
$\tilde b=(\tilde b_1,\tilde b_2,\cdots, \tilde b_d):\Omega \to \R^d$ is a random vector
and there exists a system of stream functions(for $\tilde b$) $\tilde H_{jl}:\Omega \to \R$, $1\le j,l\le d$ such that
the following properties hold.
\begin{itemize}
\item [(i)] $\tilde H_{jl}\in \HH_{1}$ for every $1\le j,l \le d$;

\item [(ii)] $\tilde H_{jl}=-\tilde H_{lj}$ for every $1\le j,l \le d$;

\item [(iii)] $\tilde b_j=\sum_{l=1}^d D_l \tilde H_{jl}$ for every $1\le j \le d$;

\item [(iv)] The function $x\mapsto D_l \tilde H_{jl}(\tau_x \w)$ is continuous for every $1\le l \le d$ and a.s. $\w\in \Omega$.
\end{itemize}
\end{assumption}
As before given $\tilde b:\Omega \to \R^d$ we define $b:\R^d\times \Omega \to \R^d$ by
\begin{equation*}
b(x;\w):=\tilde b(\tau_x \w)=\left(\tilde b_1(\tau_x \w), \tilde b_2(\tau_x \w),\cdots, \tilde b_d(\tau_x \w)\right),\ \ x\in \R^d,\ \w\in \Omega.
\end{equation*}
According to Assumption \ref{a1-2} it is easy to verify that
\begin{equation}\label{e1-5}
{\rm div}b(\cdot;\w)(x)=-\sum_{j=1}^d \frac{\partial b_j(x;\w)}{\partial x_j}=\sum_{j=1}^d D_j \tilde b_j(\tau_x \w)=
\sum_{j,l=1}^d D_jD_l \tilde H_{jl}(\tau_x\w)=0,
\end{equation}
where the last equality is in the distribution sense.

With $\left(\EE^\w_0,\FF^\w_0\right)$ and vector fields $b$ given above, we define a bilinear non-symmetric form
(for a.s. $\w\in \Omega$) as follows,
\begin{equation*}
\EE^\w(f,g):=\EE_0^\w(f,g)+\sum_{j=1}^d\int_{\R^d}b_j(x;\w)\frac{\partial f(x)}{\partial x_j}g(x)dx,\ \ \ f,g\in C_c^1(\R^d).
\end{equation*}
Using divergence free condition \eqref{e1-5}, definition of $(\EE^\w_0,\FF^\w_0)$, integration by parts formula
and standard approximation arguments, $\EE^\w$ is still well defined on $\FF_0^\w\times C_c^1(\R^d)$ as follows.
\begin{equation*}
\EE^\w(f,g)=\EE_0^\w(f,g)-\sum_{j=1}^d\int_{\R^d}b_j(x;\w)\frac{\partial g(x)}{\partial x_j}f(x)dx,\ \ \ f\in \FF_0^\w,\ g\in C_c^1(\R^d).
\end{equation*}
Then for every $\e\in (0,1)$ we define the scaled bi-linear form
\begin{equation*}
\EE^{\e,\w}(f,g)=\EE_0^{\e,\w}(f,g)-\e^{-(\alpha-1)}\sum_{j=1}^d\int_{\R^d}b_j\left(\frac{x}{\e};\w\right)\frac{\partial g(x)}{\partial x_j}f(x)dx,\ \ \ f\in
\FF_0^{\e,\w},\ g\in C_c^1(\R^d).
\end{equation*}
As before, if $\{Y_t^{\w}\}_{\ge 0}$ is the process associated with $\EE^{\w}$
(some extra condition is required to ensure the existence of $\{Y_t^\w\}_{t\ge 0}$, such as the uniform boundedness of $\mu(x;\w)$ and $b(x;\w)$), then
$\{Y_t^{\e,\w}\}_{t\ge 0}=\{\e Y_{\e^{-\alpha}t}^{\w}\}_{t\ge 0}$ is the process related to
$\EE^{\e,\w}$.

As shown in Proposition \ref{t1-1} below, for every $f\in C_c^1(\R^d)$, $\lambda>0$, $\e\in (0,1)$ and a.s. $\w\in \Omega$,
there exists a (weak) solution $u_{\lambda,f}^{\e,\w}\in \FF_0^{\e,\w}$ of the following resolvent equation associated with $\EE^{\e,\w}$,
\begin{equation}\label{e1-6}
\EE^{\e,\w} (u_{\lambda,f}^{\e,\w},g)+\lambda\int_{\R^d}u_{\lambda,f}^{\e,\w}(x)g(x)dx=\int_{\R^d}f(x)g(x)dx,\ \forall\ g\in C_c^1(\R^d).
\end{equation}

Now we will give the main theorem of this paper concerning the homogenization for scaled equation \eqref{e1-6}.

\begin{theorem}\label{t1-2}
For every $f\in C_c^1(\R^d)$, $\lambda>0$ and a.s. $\w\in \Omega$, let $u_{\lambda,f}^{\e,\w}$ be the solution
of \eqref{e1-6} obtained in Proposition \ref{t1-1}, then we have
\begin{equation}\label{t1-2-1}
\lim_{\e \to 0}\int_{O}|u_{\lambda,f}^{\e,\w}(x)-\bar u_{\lambda,f}(x)|^2dx=0,\ {\rm for\ every\ bounded\ subset\ } O\subset \R^d.
\end{equation}
Here $\bar u_{\lambda,f}$ is the unique solution in $L^2(\R^d)$ of the following equation
\begin{equation}\label{t1-2-2}
\lambda \bar u_{\lambda,f}(x)-\bar L \bar u_{\lambda,f}(x)=f(x),\ \ x\in \R^d,
\end{equation}
with
$$\bar L f(x):={\rm p.v.}\int_{\R^d}\left(f(x+z)-f(x)\right)\frac{\Ee[\mu]^2}{|z|^{d+\alpha}}dz,\ \ f\in C_c^2(\R^d).$$
\end{theorem}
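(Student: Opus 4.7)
The overall plan is the classical strategy for stochastic homogenization of resolvent equations: derive $\e$-uniform a priori bounds on $u^{\e,\w}_{\lambda,f}$, extract a subsequential $L^2_{loc}(\R^d)$-limit $\bar u$ by compactness, pass to the limit in the weak formulation \eqref{e1-6} against $g\in C_c^1(\R^d)$, and conclude by uniqueness for the homogenized equation \eqref{t1-2-2}.

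First I would obtain the uniform energy bound by formally testing \eqref{e1-6} against $g=u^{\e,\w}_{\lambda,f}$ (justified by $C_c^1$-approximation, since $u^{\e,\w}\in\FF_0^{\e,\w}$): the drift contribution vanishes because
\[
\e^{-(\alpha-1)}\sum_j\int b_j(x/\e;\w)(\partial_j u^{\e,\w})u^{\e,\w}\,dx = \tfrac{1}{2}\,\e^{-(\alpha-1)}\sum_j\int b_j(x/\e;\w)\partial_j(u^{\e,\w})^2\,dx = 0
\]
by the divergence-free identity \eqref{e1-5}. Hence $\EE_0^{\e,\w}(u^{\e,\w},u^{\e,\w}) + \lambda\|u^{\e,\w}\|_{L^2(\R^d)}^2 \leq C(\lambda)\|f\|_{L^2(\R^d)}^2$ uniformly in $\e$ and a.s.\ in $\w$. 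Combining this with the non-local $L^1_{loc}$/$L^2_{loc}$ pre-compactness criterion of \cite{CCKW1}, I extract a subsequence $\e_n\downarrow 0$ and $\bar u\in L^2(\R^d)$ such that $u^{\e_n,\w}\to \bar u$ in $L^2_{loc}(\R^d)$ almost surely, with weak convergence in $L^2(\R^d)$.

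Next I pass to the limit in \eqref{e1-6} against a fixed $g\in C_c^1(\R^d)$. For the symmetric form, ergodicity of the stationary field $(x,z)\mapsto \tilde\mu(\tau_x\w)\tilde\mu(\tau_{x+z}\w)$, together with the $L^2_{loc}$ convergence of $u^{\e_n,\w}$ and the Lipschitz regularity of $g$, yields
\[
\EE_0^{\e_n,\w}(u^{\e_n,\w},g) \longrightarrow (\Ee[\tilde\mu])^2\cdot\tfrac{1}{2}\iint\frac{(\bar u(x)-\bar u(y))(g(x)-g(y))}{|x-y|^{d+\alpha}}\,dx\,dy,
\]
where the tail $|x-y|\geq R$ is dominated by the uniform Dirichlet energy bound and the bulk $|x-y|\leq R$ by dominated convergence (using $|g(x)-g(y)|\leq\|\nabla g\|_\infty|x-y|$ and local integrability of $|x-y|^{2-d-\alpha}$). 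For the drift term, I use the stream function: from Assumption \ref{a1-2} and $D_l\tilde H_{jl}(\tau_x\w)=-\partial_{x_l}[\tilde H_{jl}(\tau_\cdot\w)](x)$, setting $H_{jl}(x;\w):=\tilde H_{jl}(\tau_x\w)$ gives $b_j(x/\e;\w)=-\e\sum_l\partial_{x_l}[H_{jl}(x/\e;\w)]$ distributionally, so after integration by parts in $x$
\[
\e^{-(\alpha-1)}\sum_j\int b_j(x/\e;\w)\partial_jg(x)u^{\e,\w}(x)\,dx = \e^{2-\alpha}\sum_{j,l}\int H_{jl}(x/\e;\w)\,\partial_l\big[\partial_jg(x)u^{\e,\w}(x)\big]\,dx.
\]
Since $\tilde H_{jl}\in\HH_1\subset L^2(\Omega;\Pp)$, \eqref{e1-5a} and the ergodic theorem bound $\|H_{jl}(\cdot/\e;\w)\|_{L^2(K)}$ uniformly in $\e$ on any compact $K\supset\mathrm{supp}(g)$, so the prefactor $\e^{2-\alpha}\to 0$ makes the contribution with $\partial_l\partial_j g\cdot u^{\e,\w}$ vanish in the limit.

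The main obstacle is the remaining contribution $\e^{2-\alpha}\sum_{j,l}\int H_{jl}(x/\e;\w)\partial_jg(x)\partial_lu^{\e,\w}(x)\,dx$, since $\partial_l u^{\e,\w}$ is not controlled by the fractional Sobolev energy bound. I would handle this by mollification $u^{\e,\w}_\delta=u^{\e,\w}*\rho_\delta$, exploiting the antisymmetry $H_{jl}=-H_{lj}$ to rewrite the sum as a Jacobian-type null form $\sum_{j,l}H_{jl}(\partial_jg\,\partial_lu^{\e,\w}_\delta-\partial_lg\,\partial_ju^{\e,\w}_\delta)/2$, then commuting the mollifier past the drift by using \eqref{e1-5} so that the commutator contributes only $O(\delta^\gamma)$ uniformly in $\e$, and finally sending $\delta\to 0$ after $\e\to 0$. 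An equivalent alternative is a corrector construction in the spirit of Papanicolaou--Varadhan \cite{PV}, adapted to the non-local setting as in \cite{KPZ,CCKW1}: one tests with $g+\e^{2-\alpha}\chi^{\e,\w}_g$, where the corrector $\chi^{\e,\w}_g$ solves an auxiliary problem absorbing the stream function, and exploits its sublinear growth. Once this step is rigorous, $\bar u$ satisfies \eqref{t1-2-2} distributionally; uniqueness in $L^2(\R^d)$ for $\lambda>0$ follows from $\bar L$ being proportional to $\Delta^{\alpha/2}$, hence the whole family $\e\downarrow 0$ converges to $\bar u_{\lambda,f}$, proving \eqref{t1-2-1}.
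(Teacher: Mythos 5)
Your overall architecture (uniform energy bounds, $L^2_{loc}$ compactness, passage to the limit in the weak formulation against test functions, uniqueness for the limit equation) coincides with the paper's, and your treatment of the symmetric part $\EE_0^{\e,\w}(u^{\e,\w}_{\lambda,f},g)$ is essentially the paper's Step 1 (splitting into near-diagonal, intermediate and far ranges, Birkhoff's theorem, and \cite[Lemma 3.1(ii)]{CCKW1}). The genuine gap is in the drift term, which you yourself flag as ``the main obstacle'' and then only sketch. After integrating by parts you are left with $\e^{2-\alpha}\sum_{j,l}\int H_{jl}(x/\e;\w)\,\partial_jg(x)\,\partial_l u^{\e,\w}_{\lambda,f}(x)\,dx$ (note that the antisymmetry of $H_{jl}$ annihilates the harmless $\partial_l\partial_j g$ contribution, not the dangerous one), and neither of your two proposed fixes is carried out. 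The null-form rewriting does not lower the regularity demanded of $u^{\e,\w}_{\lambda,f}$, which only lies in $W^{\alpha/2,2}$. In the mollification route the error terms are not $O(\delta^\gamma)$ uniformly in $\e$: replacing $u^{\e,\w}_{\lambda,f}$ by $u^{\e,\w}_{\lambda,f}*\rho_\delta$ produces one term of size $\e^{2-\alpha}\delta^{\alpha/2-1}$ and a remainder which, after integrating back by parts onto $\partial_l[H_{jl}(\cdot/\e)\partial_jg]\sim\e^{-1}$, is of size $\e^{1-\alpha}\delta^{\alpha/2}$; for $\alpha\in[1,2)$ this blows up if you send $\e\to0$ first and $\delta\to0$ afterwards, as you propose, so at best you would need a coupled choice $\delta=\delta(\e)$ and a separate justification.

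The paper closes this step without mollifiers or correctors, by a fractional duality estimate that never touches $\nabla u^{\e,\w}_{\lambda,f}$. Fix a cutoff $h\in C_c^2(\R^d)$ with $h\equiv 1$ on ${\rm supp}\,g$, write the drift pairing via Plancherel as $\e^{2-\alpha}\int i\xi_l\,\F\bigl(H_{jl}(\cdot/\e;\w)h\bigr)(\xi)\,\F\bigl(\partial_jg\,u^{\e,\w}_{\lambda,f}\bigr)(\xi)\,d\xi$, and split $|\xi|\le|\xi|^{\alpha/2}|\xi|^{1-\alpha/2}$. The factor $\int|\xi|^{\alpha}|\F(\partial_jg\,u^{\e,\w}_{\lambda,f})|^2d\xi$ is bounded uniformly in $\e$ by \eqref{t1-1-1a} and a product rule in $W^{\alpha/2,2}$, while the ergodic theorem applied to the stationary field $\tilde H_{jl}\in\HH_1\subset\HH_{1-\alpha/2}$ gives $\int|\xi|^{2-\alpha}|\F(H_{jl}(\cdot/\e;\w)h)|^2d\xi\le c(\w)\e^{-(2-\alpha)}$; hence the whole term is $O(\e^{1-\alpha/2})\to0$ for every $\alpha\in(0,2)$. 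You should replace your sketch by this argument or a fully quantified version of your own. A secondary remark: your derivation of the energy bound by testing with $g=u^{\e,\w}_{\lambda,f}$ and invoking \eqref{e1-5} is only formal when $b$ is unbounded; the paper justifies it in Proposition \ref{t1-1} through the truncations $\eta_k(\tilde H_{jl})$ and the viscosity term $\theta\Delta$, but since the theorem takes the solution and the bounds \eqref{t1-1-1a} as given, this does not affect the proof of the theorem itself.
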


\begin{remark}
For simplicity of the notation, we only consider the case that the jump kernel of $\EE^\w_0$
is with product form defined by \eqref{e1-3}. By the proof, the main theorem still holds when
the jump kernel of $\EE^\w_0$ has more general form as that in \cite{CCKW,CCKW1}.
\end{remark}

\section{Existence and uniqueness for the solution of \eqref{e1-6}}

\begin{proposition}\label{t1-1}
For every $f\in C_c^1(\R^d)$, $\lambda>0$, $\e\in (0,1)$ and a.s. $\w\in \Omega$, there exists a $u_{\lambda,f}^{\e,\w}\in \FF_0^{\e,\w}$ such that
\eqref{e1-6} holds,
and

\begin{equation}\label{t1-1-1a}
\sup_{\e\in (0,1)}\|u_{\lambda,f}^{\e,\w}\|_{L^2(\R^d)}<+\infty,\ \sup_{\e\in (0,1)}\|u_{\lambda,f}^{\e,\w}\|_{W^{\alpha/2,2}(\R^d)}<+\infty,\ \
\sup_{\e\in (0,1)}\EE_0^{\e,\w}\left(u_{\lambda,f}^{\e,\w},u_{\lambda,f}^{\e,\w}\right)<+\infty.
\end{equation}

Moreover if we assume $\alpha\in [1,2)$ and

\begin{equation}\label{t1-1-2a}
\sup_{\w\in \Omega}\left(|\tilde b(\w)|+\int_{\R^d}\frac{|b(\tau_z \w)-b(\w)|^2}{|z|^{d+2-\alpha}}dz\right)<+\infty,
\end{equation}
then $u_{\lambda,f}^{\e,\w}$ above is the unique solution of \eqref{e1-6} in $\FF_0^{\e,\w}$.
\end{proposition}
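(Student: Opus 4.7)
The argument splits into an existence/uniform--bounds part (in which the divergence--free condition is the engine of coercivity) and a uniqueness part (which needs the stronger hypothesis \eqref{t1-1-2a} to extend the form to the natural Hilbert space).

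For existence, my plan is to approximate $b$ by regularized divergence--free drifts $b^{(n)}$, solve the corresponding resolvent equation by Lax--Milgram, and then extract a weak limit in $\FF_0^{\e,\w}$. Concretely, I would truncate the stream functions by $\tilde H^{(n)}_{jl}:=\varphi_n(\tilde H_{jl})$ with a smooth odd $\varphi_n:\R\to[-n,n]$ converging to the identity; antisymmetry $\tilde H^{(n)}_{jl}=-\tilde H^{(n)}_{lj}$ is preserved because $\varphi_n$ is odd, so by the same computation as in the verification of the divergence--free condition, the induced drift $\tilde b^{(n)}_j:=\sum_l D_l\tilde H^{(n)}_{jl}$ remains divergence free, and $b^{(n)}\to b$ in $L^2_{\mathrm{loc}}(\R^d)$ $\Pp$--a.s. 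A further spatial mollification (which commutes with the divergence operator) can be used to smooth $b^{(n)}(\cdot;\w)$ if required. For each such approximation the bilinear form $\EE^{\e,\w,n}(\cdot,\cdot)+\lambda(\cdot,\cdot)_{L^2}$ is continuous on $\FF_0^{\e,\w}\times\FF_0^{\e,\w}$, and divergence--freeness together with the density of $C_c^1$ in $\FF_0^{\e,\w}$ gives $\EE^{\e,\w,n}(u,u)=\EE_0^{\e,\w}(u,u)$, so the form is coercive:
\[
\EE^{\e,\w,n}(u,u)+\lambda\|u\|_{L^2}^2\ge\min(1,\lambda)\|u\|_{\EE_0^{\e,\w}}^2.
\]
Lax--Milgram then produces $u^{(n)}\in\FF_0^{\e,\w}$ solving the approximate equation.

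The uniform bounds come from testing with $u^{(n)}$ itself: divergence--freeness kills the drift and leaves $\lambda\|u^{(n)}\|_{L^2}^2+\EE_0^{\e,\w}(u^{(n)},u^{(n)})=(f,u^{(n)})_{L^2}$, giving the $L^2$ bound $\|u^{(n)}\|_{L^2}\le\lambda^{-1}\|f\|_{L^2}$, the Dirichlet bound $\EE_0^{\e,\w}(u^{(n)},u^{(n)})\le\lambda^{-1}\|f\|_{L^2}^2$, and (via $\mu\ge\theta_0$) the $W^{\alpha/2,2}$ bound, all uniform in $n$ and $\e$. I would then extract a weak limit $u^{(n)}\rightharpoonup u^{\e,\w}_{\lambda,f}$ in $\FF_0^{\e,\w}$; for each fixed $g\in C_c^1(\R^d)$, the symmetric piece $\EE_0^{\e,\w}(u^{(n)},g)$ converges by weak convergence, and the drift term converges because $b^{(n)}\partial_j g\to b\partial_j g$ strongly in $L^2(\mathrm{supp}\,g)$ (dominated convergence on a compact set) while $u^{(n)}\rightharpoonup u^{\e,\w}_{\lambda,f}$ weakly in $L^2$. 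Weak lower semicontinuity then transmits the uniform bounds \eqref{t1-1-1a} to the limit.

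For uniqueness, let $v:=u_1-u_2$. The relation $\EE^{\e,\w}(v,g)+\lambda(v,g)_{L^2}=0$ holds for $g\in C_c^1$, but to conclude $v\equiv 0$ from divergence--freeness I want to test with $g=v$. This reduces to the sector estimate
\[
\Bigl|\,\e^{-(\alpha-1)}\sum_j\!\int b_j(x/\e;\w)\,\partial_j g(x)\,v(x)\,dx\Bigr|\le C(\e,\w)\,\EE_0^{\e,\w}(v,v)^{1/2}\,\EE_0^{\e,\w}(g,g)^{1/2},\qquad v,g\in C_c^1.
\]
The hypothesis \eqref{t1-1-2a} is designed precisely for this: the weighted $L^2$ condition $\int|b(\tau_z\w)-b(\w)|^2|z|^{-(d+2-\alpha)}\,dz<\infty$ is the Besov--type seminorm placing $b(\cdot;\w)$ in $H^{1-\alpha/2}_{\mathrm{loc}}$, which together with the uniform $L^\infty$ bound on $b$ and a fractional Leibniz/commutator estimate for $\Delta^{\alpha/2}$ allows one to absorb $\partial_j g$ ``half into $b$'' and ``half into the fractional smoothness of $g$'', so that each factor is controlled by $\EE_0^{\e,\w}(\cdot,\cdot)^{1/2}$. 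The restriction $\alpha\ge 1$ ensures $1-\alpha/2\le\alpha/2$, i.e.\ the fractional regularity of $b$ does not exceed that of $g$ and $v$ and so can indeed be absorbed. Once the sector bound holds, $\EE^{\e,\w}$ extends continuously to $\FF_0^{\e,\w}\times\FF_0^{\e,\w}$, $g=v$ becomes an admissible test, and divergence--freeness yields $\EE_0^{\e,\w}(v,v)+\lambda\|v\|_{L^2}^2=0$, hence $v\equiv 0$. The main obstacle of the proposition is precisely this sector estimate: both the $L^\infty$ bound and the fractional integrability in \eqref{t1-1-2a} must enter non--trivially, and the exponent $d+2-\alpha$ is exactly what matches the scale of $\FF_0^{\e,\w}$.
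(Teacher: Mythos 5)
Your uniqueness argument is essentially the paper's: the same Parseval splitting of $\int b_j(\cdot/\e)\partial_j g\,h\,dx$ into $\bigl(\int|\xi|^{\alpha}|\F(g)|^2\bigr)^{1/2}\bigl(\int|\xi|^{2-\alpha}|\F(b_jh)|^2\bigr)^{1/2}$, with the $L^\infty$ bound in \eqref{t1-1-2a} handling the increments of $h$, the Besov-type seminorm handling the increments of $b$, and $\alpha\ge 1$ (i.e.\ $2-\alpha\le\alpha$) letting the resulting $W^{(2-\alpha)/2,2}$ norm be absorbed into $\|\cdot\|_{W^{\alpha/2,2}}$. That half matches.

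The existence half has a genuine gap. You truncate the stream functions (as the paper does with $\eta_k$) and then assert that each approximate form $\EE^{\e,\w,n}(\cdot,\cdot)+\lambda(\cdot,\cdot)_{L^2}$ is continuous on $\FF_0^{\e,\w}\times\FF_0^{\e,\w}$, so that Lax--Milgram applies directly on $\FF_0^{\e,\w}$. This is false in general. After integrating by parts the drift term becomes $-\sum_{j,l}\int\eta_k\bigl(H_{jl}(x/\e)\bigr)\partial_jg\,\partial_lh\,dx$, a form of order two which is bounded on $W^{1,2}\times W^{1,2}$ (since $\eta_k(H_{jl})$ is bounded) but \emph{not} on $W^{\alpha/2,2}\times W^{\alpha/2,2}$ when $\alpha<2$; and in the undifferentiated form $\int b^{(n)}_j\partial_jg\,h\,dx$ the pairing $\langle\partial_jg,h\rangle$ already fails to be bounded on $H^{\alpha/2}\times H^{\alpha/2}$ for $\alpha<1$ even when $b^{(n)}$ is smooth and bounded (test on functions with Fourier support near frequency $R$: the form grows like $R$ while the norms grow like $R^{\alpha}$). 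Note also that truncating $H$ does not make $b^{(n)}_j=\sum_lD_l\eta_k(\tilde H_{jl})$ bounded, only locally bounded via Assumption \ref{a1-2}(iv), so the Fourier sector estimate is not available for the approximate drift either. Since the proposition's existence claim covers all $\alpha\in(0,2)$ and requires no sector condition, your scheme cannot close. The paper's fix, which you are missing, is a vanishing-viscosity regularization: it adds $\theta\sum_j\int\partial_jg\,\partial_jh\,dx$ to the form, applies Lax--Milgram on the smaller space $\mathbb{H}_1=\overline{C_c^1}^{\,\|\cdot\|_{\EE_0^{\e,\w}}+\|\cdot\|_{W^{1,2}}}$ where the truncated antisymmetric drift form is genuinely continuous and cancels on the diagonal, derives the a priori bounds \eqref{t1-1-6a}--\eqref{t1-1-5} (with the gradient bound of size $\theta^{-1}$), and only then sends $\theta\to0$ and $k\to\infty$ together, using the local boundedness of $b_j^k$ and strong $L^2_{loc}$ convergence to pass to the limit in the drift term. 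Without this extra layer your Lax--Milgram step has no Hilbert space on which the hypotheses are verified.
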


\begin{proof}

{\bf Step 1}  By Assumption \ref{a1-1} we know immediately that $\FF^{\e,\w}_0 \subset W^{\alpha/2,2}(\R^d)$ and
\begin{equation}\label{t1-1-2}
\begin{split}
\|g\|_{\EE_0^{\e,\w}}&\ge c_1
\left(\int_{\R^d}\int_{\R^d}\frac{(g(x+z)-g(x))^2}{|z|^{d+\alpha}}dzdx+\int_{\R^d}|g(x)|^2dx\right)=
\|g\|_{W^{\alpha/2,2}(\R^d)},\ g\in \FF^{\e,\w}_0.
\end{split}
\end{equation}

For every $\theta>0$ and integer $k\in \mathbb{N_+}$, we define a bi-linear form $\EE^{\e,\w}_{\theta,k}:C_c^1(\R^d)\times C_c^1(\R^d)\to \R$ by
\begin{equation*}
\EE^{\e,\w}_{\theta,k}(g,h):=\EE^{\e,\w}_0(g,h)+\theta\sum_{j=1}^d\int_{\R^d}\frac{\partial g(x)}{\partial x_j}\frac{\partial h(x)}{\partial x_j}dx+
\e^{-(\alpha-1)}\sum_{j=1}^d \int_{\R^d}b_{j}^k\left(\frac{x}{\e};\w\right)\frac{\partial g(x)}{\partial x_j}h(x)dx,\ f,g\in C_c^1(\R^d),
\end{equation*}
where $b_j^k(x;\w):=\tilde b_j^k(\tau_x \w)$, $\tilde b_j^k(\w):=\sum_{l=1}^d D_l \left(\eta_k\left(\tilde H_{jl}(\cdot)\right)\right)(\w)$,
and $\eta_k:\R \to (0,+\infty)$ is a function such that
\begin{align*}
\eta_k(s)=
\begin{cases}
s,\ \ &\ |s|\le k,\\
\in (-k,k),\ \ &\ k<|s|<2k,\\
0,\ \ &\ |s|\ge 2k,
\end{cases}
\end{align*}
$\sup_{k\ge 1}\sup_{s\in \R}|\eta_k'(s)|\le 2$ and $\eta_k(s)=-\eta_k(-s)$ for every $s\in \R$.

Then by definition and anti-symmetric property of $\{\tilde H_{jl}\}_{1\le j,l\le d}$(also the anti-symmetry of
$\{\eta_k(\tilde H_{jl})\}_{1\le j,l\le d}$) we have
for every $f,g\in C_c^1(\R^d)$,
\begin{equation*}
\EE^{\e,\w}_{\theta,k}(g,h):=\EE^{\e,\w}_0(g,h)+\theta\sum_{j=1}^d\int_{\R^d}\frac{\partial g(x)}{\partial x_j}\frac{\partial h(x)}{\partial x_j}dx-
\e^{2-\alpha}\sum_{j,l=1}^d \int_{\R^d}\eta_k\left(H_{jl}\left(\frac{x}{\e};\w\right)\right)\frac{\partial g(x)}{\partial x_j}\frac{\partial h(x)}{\partial x_l}dx,
\end{equation*}
by this it is easy to verify that
\begin{equation}\label{t1-1-3}
\begin{split}
|\EE^{\e,\w}_{\theta,k}(g,h)|&\le c_2(\e,\theta,k)\left(\|g\|_{\EE^{\e,\w}_0}+\|g\|_{W^{1,2}(\R^d)}\right)
\cdot\left(\|h\|_{\EE^{\e,\w}_0}+\|h\|_{W^{1,2}(\R^d)}\right)\\
&=:c_2(\e,\theta,k)\|g\|_{\EE^{\e,\w}_1}\|h\|_{\EE^{\e,\w}_1},\ \ f,g\in C_c^1(\R^d),
\end{split}
\end{equation}
where $c_2(\e,\theta,k)$ is a positive constant which may depend on $\e$, $\theta$, $k$, and
$\|g\|_{\EE^{\e,\w}_1}:=\|g\|_{\EE^{\e,\w}_0}+\|g\|_{W^{1,2}(\R^d)}$.

Let $\mathbb{H}_1:=\overline{C_c^1(\R^d)}^{\|\cdot\|_{\EE^{\e,\w}_1}}$ be the closed extension of
$C_c^1(\R^d)$ under the norm $\|\cdot\|_{\EE^{\e,\w}_1}$ defined above, clearly we have
$\mathbb{H}_1\subset W^{1,2}(\R^d)$. Hence according to \eqref{t1-1-3} we can extend
$\EE^{\e,\w}_{\theta,k}$ to a continuous bilinear form on $\mathbb{H}_1\times \mathbb{H}_1$.

Meanwhile by anti-symmetry of $\{\tilde H_{jl}\}_{1\le j,l\le d}$ and the fact $\eta(s)=-\eta(-s)$ it holds that
\begin{align}\label{t1-1-4}
\sum_{j,l=1}^d\int_{\R^d}\eta_k\left(H_{jl}^k\left(\frac{x}{\e};\w\right)\right)\frac{\partial g(x)}{\partial x_j}\frac{\partial g(x)}{\partial x_l}dx=0,
\ \ \forall\ g\in \mathbb{H}_1,
\end{align}
which implies that for every $\lambda>0$,
\begin{equation*}
\|g\|_{L^2(\R^d)}^2+\EE_{\theta,k}^{\e,\w}(g,g)\ge c_3(\lambda,\theta,k)\|g\|_{\EE^\w_1}^2,\ \ g\in \mathbb{H}_1.
\end{equation*}
Therefore according to the Lax-Milgram theorem for every $f\in C_c^1(\R^d)$, there exists a unique $u_{\lambda,f}^{\e,\theta,k,\w}\in \mathbb{H}_1$ such that
\begin{equation}\label{t1-1-4a}
\begin{split}
&\quad \EE^{\e,\w}_0(u_{\lambda,f}^{\e,\theta,k,\w},g)+\theta\sum_{j=1}^d\int_{\R^d}\frac{\partial u_{\lambda,f}^{\e,\theta,k,\w}(x)}{\partial x_j}
\frac{\partial g(x)}{\partial x_j}\\
&-\e^{-(\alpha-1)}\sum_{j=1}^d \int_{\R^d}b_{j}^k\left(\frac{x}{\e};\w\right)\frac{\partial g(x)}{\partial x_j}
u_{\lambda,f}^{\e,\theta,k,\w}(x)dx
+\lambda\int_{\R^d}u_{\lambda,f}^{\e,\theta,k,\w}(x)g(x)dx\\
&=\int_{\R^d}f(x)g(x)dx,\ \forall\ g\in \mathbb{H}_1.
\end{split}
\end{equation}
So taking $g=u_{\lambda,f}^{\e,\theta,k,\w}$ in the above equation and using \eqref{t1-1-4} we have
\begin{align*}
&\quad \EE^{\e,\w}_0(u_{\lambda,f}^{\e,\theta,k,\w},u_{\lambda,f}^{\e,\theta,k,\w})+\theta\sum_{j=1}^d\int_{\R^d}
\left(\frac{\partial u_{\lambda,f}^{\e,\theta,k,\w}(x)}{\partial x_j}\right)^2dx+\lambda\int_{\R^d}|u_{\lambda,f}^{\e,\theta,k,\w}(x)|^2dx\\
&=\int_{\R^d}f(x)u_{\lambda,f}^{\e,\theta,k,\w}(x)dx
\le \frac{\lambda}{2}\int_{\R^d}|u_{\lambda,f}^{\e,\theta,k,\w}(x)|^2dx+(2\lambda)^{-1}\int_{\R^d}|f(x)|^2dx.
\end{align*}
By this and \eqref{t1-1-2} we obtain immediately that
\begin{align}\label{t1-1-6a}
\sup_{\e,\theta\in (0,1),k\ge 1}\|u_{\lambda,f}^{\e,\theta,k,\w}\|_{W^{\alpha/2,2}(\R^d)}\le c_1^{-1}\sup_{\e,\theta\in (0,1),k\ge 1}\EE^\w_0(u_{\lambda,f}^{\e,\theta,k,\w},u_{\lambda,f}^{\e,\theta,k,\w})\le c_4(\lambda)<\infty,
\end{align}
\begin{align}\label{t1-1-8}
\sup_{\e,\theta\in (0,1),k\ge 1}\int_{\R^d}|u_{\lambda,f}^{\e,\theta,k,\w}(x)|^2dx\le \lambda^{-2}\int_{\R^d}|f(x)|^2dx,
\end{align}
and for every $\theta\in (0,1)$,
\begin{align}\label{t1-1-5}
\sup_{\e\in (0,1),k\ge 1}\sum_{j=1}^d\int_{\R^d}
\left(\frac{\partial u_{\lambda,f}^{\e,\theta,k,\w}(x)}{\partial x_j}\right)^2dx\le c_4(\lambda)\theta^{-1}.
\end{align}
Hence for every fixed $\e\in (0,1)$, $\{u_{\lambda,f}^{\e,\theta,k,\w}\}_{\theta\in (0,1),k\ge 1}$ is weakly compact in $\left(\FF^{\e,\w}_0,\|\cdot\|_{\EE^{\e,\w}_0}\right)$ and
$W^{\alpha/2,2}(\R^d)$, so there exists a subsequence $\{u_{\lambda,f}^{\e,\theta_m,k_m,\w}\}$ and $u_{\lambda,f}^{\e,\w}\in \FF^{\e,\w}_0$ such that
\begin{align*}
\lim_{m \to \infty}\EE^{\e,\w}_0(u_{\lambda,f}^{\e,\theta_m,k_m,\w},g)=\EE^{\e,\w}_0(u_{\lambda,f}^{\e,\w},g),\ \forall\ g\in C_c^1(\R^d),
\end{align*}
and
\begin{equation}\label{t1-1-5a}
\lim_{m \to \infty}\int_{O}\left|u_{\lambda,f}^{\e,\theta_m,k_m,\w}(x)-u_{\lambda,f}^{\e,\w}(x)\right|^2dx=0,\ {\rm for\ any\ bounded\ subset}\ O\subset \R^d.
\end{equation}
Using \eqref{t1-1-5} it holds that
\begin{align*}
\lim_{m \to \infty}\theta_m\sum_{j=1}^d\left|\int_{\R^d}\frac{\partial u_{\lambda,f}^{\e,\theta_m,k_m,\w}(x)}{\partial x_j}\frac{\partial g(x)}{\partial x_j}dx\right|
&\le \lim_{m \to \infty}\theta_m\sum_{j=1}^d \left(\left\|\frac{\partial u_{\lambda,f}^{\e,\theta_m,k_m,\w}}{\partial x_j}\right\|_{L^2(\R^d)}\cdot
\left\|\frac{\partial g}{\partial x_j}\right\|_{L^2(\R^d)}\right)\\
&\le c_5\lim_{m \to \infty}\theta_m^{1/2}=0.
\end{align*}
For every $g\in C_c^1(\R^d)$, let $O\subset \R^d$ be the support of $g$, since $O$ is bounded, by definition of $b_j^k(x;\w)$(note that $|\eta_k'(s)|\le 2$)
we know for every fixed $\e\in (0,1)$ (note that by Assumption \ref{a1-2} (iv), $x\mapsto D_l\tilde H_{jl}(\tau_x \w)$ is continuous)
\begin{align*}
\sup_{k\ge 1, x\in O}|b_j^k\left(\frac{x}{\e};\w\right)|\le c_6\sup_{x\in \frac{O}{\e}}\sup_{1\le j,l\le d}\left|\frac{H_{jl}(x;\w)}{\partial x_l}\right|<c_7(\e,\w)<+\infty,\ \ \w\in \Omega.
\end{align*}
According to this, \eqref{t1-1-5a} and dominated convergence theorem yields that
\begin{align*}
\lim_{m \to \infty}\int_{\R^d}b_j^{k_m}\left(\frac{x}{\e};\w\right)\frac{\partial g(x)}{\partial x_j}u_{\lambda,f}^{\e,\theta_m,k_m,\w}(x)dx&=
\lim_{m \to \infty}\int_{\R^d}b_j^{k_m}\left(\frac{x}{\e};\w\right)\frac{\partial g(x)}{\partial x_j}u_{\lambda,f}^{\e,\w}(x)dx\\
&=\int_{\R^d}b_j\left(\frac{x}{\e};\w\right)\frac{\partial g(x)}{\partial x_j}u_{\lambda,f}^{\e,\w}(x)dx,\ \forall\ g\in C_c^1(\R^d).
\end{align*}

Therefore putting $u_{\lambda,f}^{\e,\theta_m,k_m,\w}$ in equation \eqref{t1-1-4a} and letting $m \to \infty$ we arrive at
for every $g\in C_c^1(\R^d)$,
\begin{equation*}
\begin{split}
&\quad \EE^{\e,\w}_0(u_{\lambda,f}^{\e,\w},g)-\e^{-(\alpha-1)}\sum_{j=1}^d \int_{\R^d}b_{j}\left(\frac{x}{\e};\w\right)\frac{\partial g(x)}{\partial x_j}u_{\lambda,f}^{\e,\w}(x)dx
+\lambda\int_{\R^d}u_{\lambda,f}^{\e,\w}(x)g(x)dx=\int_{\R^d}f(x)g(x)dx.
\end{split}
\end{equation*}
Combining this with \eqref{t1-1-6a}, \eqref{t1-1-8} implies \eqref{e1-6} and \eqref{t1-1-1a}.

{\bf Step 2} Now we consider the uniqueness of the solution.
For every $g,h\in C_c^1(\R^d)$ and $1\le j \le d$, by Parseval's identity it holds that
\begin{equation*}
\begin{split}
\int_{\R^d} b_j\left(\frac{x}{\e};\w\right)\frac{\partial g(x)}{\partial x_j} h(x)dx&=
\int_{\R^d}\F\left(\frac{\partial g}{\partial x_j}\right)(\xi)
\F\left(b_j\left(\frac{\cdot}{\e};\w\right)h(\cdot)\right)(\xi)d\xi\\
&=i \int_{\R^d}\xi_j\F(g)(\xi)\F\left(b_j\left(\frac{\cdot}{\e};\w\right)h(\cdot)\right)(\xi)d\xi,
\end{split}
\end{equation*}
where $\F(g)(\xi):=\frac{1}{(2\pi)^d}\int_{\R^d}e^{i\langle \xi, x\rangle}g(x)dx$ denotes
the Fourier transform for a function $g\in L^1(\R^d)$.

By this we have
\begin{equation}\label{t1-1-7}
\begin{split}
&\quad\left|\int_{\R^d} b_j\left(\frac{x}{\e};\w\right)\frac{\partial g(x)}{\partial x_j} h(x)dx\right|\\
&\le
\int_{\R^d}|\xi_j|\left|\F(g)(\xi)\right|\left|\F\left(b_j\left(\frac{\cdot}{\e};\w\right)h(\cdot)\right)(\xi)\right|d\xi\\
&\le c_8\left(\int_{\R^d}|\xi|^\alpha\left|\F(g)(\xi)\right|^2dx\right)^{1/2}\cdot\left(\int_{\R^d}|\xi|^{2-\alpha}
\left|\F\left(b_j\left(\frac{\cdot}{\e};\w\right)h(\cdot)\right)(\xi)\right|^2 d\xi\right)^{1/2}\\
&=c_9\left(\int_{\R^d}\int_{\R^d}\frac{(g(x+z)-g(x))^2}{|z|^{d+\alpha}}dzdx\right)^{1/2}\cdot
\left(\int_{\R^d}\int_{\R^d}\frac{(b_j\left(\frac{x+z}{\e};\w\right)h(x+z)-b_j\left(\frac{x}{\e};\w\right)h(x))^2}{|z|^{d+2-\alpha}}dzdx\right)^{1/2}.
\end{split}
\end{equation}
Meanwhile according to \eqref{t1-1-2a} it holds that
\begin{align*}
&\quad\int_{\R^d}\int_{\R^d}\frac{\left(b_j\left(\frac{x+z}{\e};\w\right)h(x+z)-b_j\left(\frac{x}{\e};\w\right)h(x)\right)^2}{|z|^{d+2-\alpha}}dzdx\\
&\le 2\int_{\R^d}\int_{\R^d}\left|b_j\left(\frac{x+z}{\e};\w\right)\right|^2\frac{\left|h(x+z)-h(x)\right|^2}{|z|^{d+2-\alpha}}dzdx+
2\int_{\R^d}\int_{\R^d}|h(x)|^2\frac{\left|b_j\left(\frac{x+z}{\e};\w\right)-b_j\left(\frac{x}{\e};\w\right)\right|^2}{|z|^{d+2-\alpha}}dzdx\\
&=2\int_{\R^d}\int_{\R^d}\left|\tilde b_j(\tau_{\frac{x+z}{\e}}\w)\right|^2\frac{\left|h(x+z)-h(x)\right|^2}{|z|^{d+2-\alpha}}dzdx+
2\e^{-(2-\alpha)}\int_{\R^d}\int_{\R^d}|h(x)|^2
\frac{\left|\tilde b_j\left(\tau_z(\tau_{\frac{x}{\e}}\w)\right)-\tilde b_j\left(\tau_{\frac{x}{\e}};\w\right)\right|^2}{|z|^{d+2-\alpha}}dzdx\\
&\le c_{10}\left(\int_{\R^d}\int_{\R^d}\frac{\left|h(x+z)-h(x)\right|^2}{|z|^{d+2-\alpha}}dzdx+\e^{-(2-\alpha)}\int_{\R^d}|h(x)|^2dx\right).
\end{align*}
Also note that $2-\alpha\le \alpha$ when $\alpha\in [1,2)$, so we derive
\begin{align*}
&\quad\int_{\R^d}\int_{\R^d}\frac{\left|h(x+z)-h(x)\right|^2}{|z|^{d+2-\alpha}}dzdx\\
&=\int_{\R^d}\int_{\{|z|\le 1\}}\frac{\left|h(x+z)-h(x)\right|^2}{|z|^{d+2-\alpha}}dzdx+
\int_{\R^d}\int_{\{|z|> 1\}}\frac{\left|h(x+z)-h(x)\right|^2}{|z|^{d+2-\alpha}}dzdx\\
&\le \int_{\R^d}\int_{\{|z|\le 1\}}\frac{\left|h(x+z)-h(x)\right|^2}{|z|^{d+\alpha}}dzdx+c_{11}\int_{\R^d}|h(x)|^2dx
\end{align*}
Putting all above estimates together into \eqref{t1-1-7} yields that
\begin{align*}
\left|\int_{\R^d} b_j\left(\frac{x}{\e},\w\right)\frac{\partial g(x)}{\partial x_j} h(x)dx\right|\le
c_{12}(\e)\|g\|_{W^{\alpha/2,2}(\R^d)}\|h\|_{W^{\alpha/2,2}(\R^d)},\ \ \forall\ h,g\in C_c^1(\R^d),
\end{align*}
where $c_{12}(\e)$ is a positive constant which may depend on $\e$.

By this, \eqref{t1-1-2} and the definition of $\EE^{\e,\w}$ we know immediately that
\begin{align*}
\EE^{\e,\w}(h,g)\le c_{13}(\e)\|g\|_{\EE^{\e,\w}_0}\|h\|_{\EE^{\e,\w}_0},\ \ \forall\ h,g\in C_c^1(\R^d).
\end{align*}
Based on the estimate we can extend $\EE^{\e,\w}$ to a continuous bilinear form on $\EE^{\e,\w}_0\times \EE^{\e,\w}_0$.

Hence applying \eqref{t1-1-4} and using standard approximation arguments we derive
\begin{align*}
c_{14}\|g\|_{\EE^{\e,\w}_0}^2\le \EE^{\e,\w}(g,g) \le c_{15}\|g\|_{\EE^{\e,\w}_0}^2,\ \ \forall\ g\in \EE^{\e,\w}_0,
\end{align*}
where the positive constants $c_{14}$, $c_{15}$ here are independent of $\e$.
By this we can apply the Lax-Milgram theorem to prove that there exists a unique solution $u_{\lambda,f}^{\e,\w}$ for
\eqref{e1-6} which satisfies \eqref{t1-1-1a}.
\end{proof}

\section{Proof of Main Theorem}
Now we are ready to prove Theorem \ref{t1-2}.
\begin{proof} [Proof of Theorem $\ref{t1-2}$]
By \eqref{t1-1-1a} we know $\{u_{\lambda,f}^{\e,\w}\}_{\e\in (0,1)}$ is weakly compact in
$W^{\alpha/2,2}(\R^d)$, hence it is compact in $L_{loc}^2(\R^d)$. Then it suffices to prove that for every
subsequence $\{u_{\lambda,f}^{\e_m,\w}\}_{m\ge 1}$ which is convergent in $L_{loc}^2(\R^d)$, it holds that
\begin{equation*}
\lim_{m \to \infty}\int_{\R^d}\left|u_{\lambda,f}^{\e_m,\w}-\bar u_{\lambda,f}(x)\right|^2dx=0, \ \ {\rm for\ every\ bounded\ subset}\ O\subset \R^d,
\end{equation*}
where $\bar u_{\lambda,f}$ is the unique solution of \eqref{t1-2-2}.

Suppose that $\{u_{\lambda,f}^{\e_m,\w}\}_{m\ge 1}$ is a (arbitrarily fixed) subsequence such that
\begin{equation}\label{t1-2-3}
\lim_{m \to \infty}\int_{O}\left|u_{\lambda,f}^{\e_m,\w}(x)-u_0^{\w}(x)\right|^2dx=0,\ \ {\rm for\ every\ bounded\ subset}\ O\subset \R^d,
\end{equation}
for some $u_0^{\w}\in L^2(\R^d)$. Now we are going to prove $u_0^{\w}=\bar u_{\lambda,f}$.

According to \eqref{e1-6} we obtain for every $g\in C_c^1(\R^d)$,
\begin{equation}\label{t1-2-4}
\EE_0^{\e_m,\w}(u_{\lambda,f}^{\e_m,\w},g)-
\e_m^{-(\alpha-1)}\sum_{j=1}^d\int_{\R^d}b_j\left(\frac{x}{\e},\w\right)\frac{\partial g(x)}{\partial x_j}u_{\lambda,f}^{\e_m,\w}(x)dx
+\lambda\int_{\R^d}u_{\lambda,f}^{\e_m,\w}(x)g(x)dx=\int_{\R^d}f(x)g(x)dx.
\end{equation}

{\bf Step 1} In this step, we are going to prove
\begin{equation}\label{t1-2-5}
\lim_{m \to \infty}\EE_0^{\e_m,\w}(u_{\lambda,f}^{\e_m,\w},g)=-\int_{\R^d}u_0^\w(x)\bar L g(x)dx,\ \forall\ g\in C_c^2(\R^d).
\end{equation}
The main method in this step is similar with that in \cite{CCKW1}, for convenience of the reader we will give a detailed proof here.
For every $\delta\in (0,1)$ we set
\begin{equation*}
\begin{split}
&\quad \EE_0^{\e_m,\w}(u_{\lambda,f}^{\e_m,\w},g)\\
&=\frac{1}{2}
\int_{\R^d}\left(\int_{\{|z|\le \delta\}}+\int_{\{\delta<|z|<\delta^{-1}\}}+\int_{\{|z|\ge \delta^{-1}\}}\right)
\left(u_{\lambda,f}^{\e_m,\w}(x+z)-u_{\lambda,f}^{\e_m,\w}(x)\right)\left(g(x+z)-g(x)\right)\\
&\cdot\frac{\mu\left(\frac{x}{\e_m};\w\right)\mu\left(\frac{x+z}{\e_m};\w\right)}{|z|^{d+\alpha}}dzdx
=:I_1^{m,\delta,\w}+I_2^{m,\delta,\w}+I_3^{m,\delta,\w}.
\end{split}
\end{equation*}
Suppose that ${\rm supp}g\subset B(0,R_0):=\{x\in \R^d; |x|\le R_0\}$ for some $R_0>0$. Apply Cauchy-Schwartz inequality
we derive that
\begin{align*}
|I_1^{m,\delta,\w}|&\le \left(\frac{1}{2}\int_{\R^d}\int_{\{|z|\le \delta\}}
\left(u_{\lambda,f}^{\e_m,\w}(x+z)-u_{\lambda,f}^{\e_m,\w}(x)\right)^2\frac{\mu\left(\frac{x}{\e_m};\w\right)\mu\left(\frac{x+z}{\e_m};\w\right)}{|z|^{d+\alpha}}dzdx\right)^{1/2}\\
&\quad \cdot \left(\frac{1}{2}\int_{\R^d}\int_{\{|z|\le \delta\}}
\left(g(x+z)-g(x)\right)^2\frac{\mu\left(\frac{x}{\e_m};\w\right)\mu\left(\frac{x+z}{\e_m};\w\right)}{|z|^{d+\alpha}}\right)^{1/2}\\
&\le \EE_0^{\e_m,\w}\left(u_{\lambda,f}^{\e_m,\w},u_{\lambda,f}^{\e_m,\w}\right)^{1/2}\cdot
\left(\frac{\|\nabla g\|_\infty^2}{2}\int_{\{|x|\le R_0+1\}}\int_{\{|z|\le \delta\}}
\left(\mu\left(\frac{x+z}{\e_m};\w\right)^2+\mu\left(\frac{x}{\e_m};\w\right)^2\right)\frac{|z|^2}{|z|^{d+\alpha}}dzdx\right)^{1/2}\\
&\le c_1(\w)
\left(\|\nabla g\|_\infty^2\int_{\{|x|\le R_0+2\}}\mu\left(\frac{x}{\e_m};\w\right)^2\left(\int_{\{|z|\le \delta\}}
\frac{|z|^2}{|z|^{d+\alpha}}dz\right)dx\right)^{1/2},
\end{align*}
where $c_1(\w):=\sup_{m\ge 1}\EE_0^{\e_m,\w}\left(u_{\lambda,f}^{\e_m,\w},u_{\lambda,f}^{\e_m,\w}\right)^{1/2}$
is a positive constant(may depend on $\w$) which is finite due to \eqref{t1-1-1a}.

By direct computation we obtain
\begin{align*}
\int_{\{|x|\le R_0+2\}}\int_{\{|z|\le \delta\}}
\mu\left(\frac{x}{\e_m};\w\right)^2\frac{|z|^2}{|z|^{d+\alpha}}dzdx&\le
c_2\delta^{2-\alpha}\cdot \int_{\{|x|\le R_0+2\}}\tilde \mu\left(\tau_\frac{x}{\e_m}\w\right)^2dx.
\end{align*}
Hence by Birkhoff ergodic theorem (see, for example, \cite[Proposition 2.1]{CCKW1} or \cite[Theorem 7.2]{JKO}) it holds that for every fixed $\delta\in (0,1)$,
\begin{align*}
\lim_{m \to \infty}\int_{\{|x|\le R_0+2\}}\int_{\{|z|\le \delta\}}
\mu\left(\frac{x}{\e_m};\w\right)^2\frac{|z|^2}{|z|^{d+\alpha}}dzdx \le c_3\Ee[\tilde \mu^2]\delta^{2-\alpha}.
\end{align*}

Similarly for $I_3^{m,\delta,\w}$ we obtain
\begin{align*}
|I_3^{m,\delta,\w}|&\le \left(\frac{1}{2}\int_{\R^d}\int_{\{|x-y|> \delta^{-1}\}}
\left(u_{\lambda,f}^{\e_m,\w}(y)-u_{\lambda,f}^{\e_m,\w}(x)\right)^2\frac{\mu\left(\frac{x}{\e_m};\w\right)\mu\left(\frac{y}{\e_m};\w\right)}{|x-y|^{d+\alpha}}dydx\right)^{1/2}\\
&\quad \cdot \left(\frac{1}{2}\int_{\R^d}\int_{\{|x-y|> \delta^{-1}\}}
\left(g(y)-g(x)\right)^2\frac{\mu\left(\frac{x}{\e_m};\w\right)\mu\left(\frac{y}{\e_m};\w\right)}{|x-y|^{d+\alpha}}dydx\right)^{1/2}.\\
\end{align*}
For $\delta^{-1}>2(R_0+2)$, we know $\left(g(x)-g(y)\right)1_{\{|x-y|>\delta^{-1}\}}\neq 0$ only if
$x\in B(0,R_0)$ or $y\in B(0,R_0)$. Then using the symmetry of $x$ and $y$ we have
\begin{align*}
&\varlimsup_{m\to \infty} \iint_{\{|x-y|>\delta^{-1}\}}(g(y)- g(x))^2\frac{\mu\left(\frac{x}{\e_m};\w\right)\mu\left(\frac{y}{\e_m};\w\right)}{|x-y|^{d+\alpha}}\,dx\,dy
\\
& \leq  4\|g\|_\infty^2\varlimsup_{m \to \infty}  \int_{B(0, R_0)} \left( \int_{\{|y-x|>\delta^{-1}\}} \frac{\tilde \mu\left(\frac{y}{\e_m};\w\right)}
{|y-x|^{d+\alpha}} dy \right)
 \mu\left(\frac{x}{\e_m};\w\right) \,dx \\
&\le c_2\varlimsup_{m \to \infty}\int_{\{|x|\le R_0+2\}}\mu\left(\frac{x}{\e_m};\w\right)^2\left(\int_{\{|x-y|> \delta^{-1}\}}
\frac{1}{|x-y|^{d+\alpha}}dy\right)dx\\
&\quad\quad +c_2\varlimsup_{m \to \infty}\int_{\{|x|\le R_0+2\}}\int_{\{|z|> \delta^{-1}\}}\frac{\mu\left(\frac{y}{\e_m};\w\right)^2}{|x-y|^{d+\alpha}}dydx,
\end{align*}
where the last step follows from Cauchy-Schwartz inequality.
Applying Birkhoff ergodic theorem  we get
\begin{align*}
\lim_{m \to \infty}\int_{\{|x|\le R_0+2\}}\mu\left(\frac{x}{\e_m};\w\right)^2\left(\int_{\{|x-y|> \delta^{-1}\}}
\frac{1}{|x-y|^{d+\alpha}}dy\right)dx\le c_4\Ee[\tilde \mu^2]\delta^{\alpha}.
\end{align*}
For $\delta^{-1}\ge 2(R_0+2)$ we have
\begin{align*}
\int_{\{|x|\le R_0+2\}}\int_{\{|x-y|> \delta^{-1}\}}\frac{\tilde \mu\left(\tau_{\frac{y}{\e_m}}\w\right)^2}{|x-y|^{d+\alpha}}dydx
&\le c_5\int_{\{|x|\le R_0+2\}}\int_{\{|y|> (2\delta)^{-1}\}}\frac{\tilde \mu\left(\tau_{\frac{y}{\e_m}}\w\right)^2}{|y|^{d+\alpha}}dydx\\
&=:c_5\sum_{k=0}^\infty |B(0,R_0+2)|\int_{U_k}\frac{\tilde \mu\left(\tau_{\frac{y}{\e_m}}\w\right)^2}{|y|^{d+\alpha}}dy\\
&\le c_6\delta^{d+\alpha}\sum_{k=0}^\infty 2^{-k(d+\alpha)}\e_m^{d}\int_{B\left(0,\frac{2^{k+1}(2\delta)^{-1}}{\e_m}\right)}\tilde \mu\left(\tau_{y}\w\right)^2 dy.
\end{align*}
where $U_k:=\{y\in \R^d; 2^k(2\delta)^{-1}<|y|\le 2^{k+1}(2\delta)^{-1}\}$ for every integer $k\ge 0$. According to the ergodic theorem again
there exists a constant $c_7(\w)$(which may depend on $\w$) such that
\begin{align*}
\sup_{m\ge 1}\e_m^d\int_{B(0,\e_m^{-1})} \tilde \mu(\tau_y \w)^2 dy\le c_7(\w)\Ee[\tilde \mu^2].
\end{align*}
Hence using this in above inequality yields
\begin{align*}
\sup_{m\ge 1}\int_{\{|x|\le R_0+2\}}\int_{\{|x-y|> \delta^{-1}\}}\frac{\mu\left(\frac{y}{\e_m};\w\right)^2}{|x-y|^{d+\alpha}}dydx
&\le c_8(\w)\delta^\alpha\Ee[\tilde \mu^2]\sum_{k=0}^\infty 2^{-k\alpha}\le c_9(\w)\delta^\alpha.
\end{align*}
Combining all above estimates we can verify directly that
\begin{align}\label{t1-2-6}
\lim_{\delta \downarrow 0}\varlimsup_{m \to \infty}\left(|I_1^{m,\delta,\w}|+|I_3^{m,\delta,\w}|\right)=0.
\end{align}

By the change of variable $\tilde z=-z$ and $\tilde x=x+z$ for the integral
$$\int_{\R^d}\int_{\{\delta<|z|<\delta^{-1}\}}u_{\lambda,f}^{\e_m,\w}\left(x+z;\w\right)\left(g(x+z)-g(x)\right)
\frac{\mu\left(\frac{x}{\e_m};\w\right)\mu\left(\frac{x+z}{\e_m};\w\right)}{|z|^{d+\alpha}}dzdx$$
we have
\begin{align*}
I_2^{m,\delta,\w}&=-\int_{\R^d}\int_{\{\delta<|z|<\delta^{-1}\}}
u_{\lambda,f}^{\e_m,\w}\left(x;\w\right)\left(g(x+z)-g(x)\right)
\frac{\mu\left(\frac{x}{\e_m};\w\right)\mu\left(\frac{x+z}{\e_m};\w\right)}{|z|^{d+\alpha}}dzdx,
\end{align*}
which implies that for each fixed $\delta\in (0,1)$,
\begin{align*}
&\quad \lim_{m \to \infty}\left|I_2^{m,\delta,\w}-\frac{1}{2}\int_{\R^d}\int_{\{\delta<|z|<\delta^{-1}\}}
\left(u_{0}^{\w}(x+z)-u_0^\w(x)\right)\left(g(x+z)-g(x)\right)
\frac{\mu\left(\frac{x}{\e_m};\w\right)\mu\left(\frac{x+z}{\e_m};\w\right)}{|z|^{d+\alpha}}dzdx\right|\\
&\le \lim_{m \to \infty}c_{10}\|g\|_\infty\delta^{-d-\alpha}\cdot
\left(\int_{\{|x|\le R_0+\delta^{-1}\}}\int_{\{\delta<|z|<\delta^{-1}\}}\left|u_{\lambda,f}^{\e_m,\w}\left(x\right)-u_0^\w(x)\right|^2dzdx\right)^{1/2}\\
&\cdot \left(\int_{\{|x|\le R_0+\delta^{-1}\}}\int_{\{\delta<|z|<\delta^{-1}\}}\mu\left(\frac{x}{\e_m};\w\right)^2
\mu\left(\frac{x+z}{\e_m};\w\right)^2 dzdx\right)^{1/2}\\
&\le \lim_{m \to \infty}c_{11}(\delta)
\left(\int_{\{|x|\le R_0+\delta^{-1}\}}\left|u_{\lambda,f}^{\e_m,\w}\left(x\right)-u_0^\w(x)\right|^2dx\right)^{1/2}\\
&\cdot \left(\int_{\{|x|\le R_0+2\delta^{-1}\}}\int_{\{|y|\le R_0+2\delta^{-1}\}}\tilde \mu\left(\tau_{\frac{x}{\e_m}}\w\right)^2
\tilde \mu\left(\tau_{\frac{y}{\e_m}}\w\right)^2 dxdy\right)^{1/2}\\
&\le \lim_{m \to \infty}c_{11}(\delta)\Ee[\tilde \mu^2]\left(\int_{\{|x|\le R_0+\delta^{-1}\}}\left|u_{\lambda,f}^{\e_m,\w}\left(x\right)-u_0^\w(x)\right|^2dx\right)^{1/2}=0,
\end{align*}
where in the third inequality we have used the ergodic theorem, and the last step is due to \eqref{t1-2-3}.

According to \cite[Lemma 3.1(ii)]{CCKW1} we have
\begin{align*}
&\quad \lim_{m \to \infty}\int_{\R^d}\int_{\{\delta<|z|<\delta^{-1}\}}
\left(u_{0}^{\w}(x+z)-u_0^\w(x)\right)\left(g(x+z)-g(x)\right)
\frac{\mu\left(\frac{x}{\e_m};\w\right)\mu\left(\frac{x+z}{\e_m};\w\right)}{|z|^{d+\alpha}}dzdx\\
&=\int_{\R^d}\int_{\{\delta<|z|<\delta^{-1}\}}
\left(u_{0}^{\w}(x+z)-u_0^\w(x)\right)\left(g(x+z)-g(x)\right)
\frac{\Ee[\tilde \mu]^2}{|z|^{d+\alpha}}dzdx.
\end{align*}
Therefore putting all these estimates together yields that
\begin{align*}
\lim_{m \to \infty}I_2^{m,\delta,\w}&=\frac{1}{2}\int_{\R^d}\int_{\{\delta<|z|<\delta^{-1}\}}
\left(u_{0}^{\w}(x+z)-u_0^\w(x)\right)\left(g(x+z)-g(x)\right)
\frac{\Ee[\tilde \mu]^2}{|z|^{d+\alpha}}dzdx\\
&=-\int_{\R^d}u_0^{\w}(x)\left(\int_{\{\delta<|z|<\delta^{-1}\}}\left(g(x+z)-g(x)\right)
\frac{\Ee[\tilde \mu]^2}{|z|^{d+\alpha}}dz\right)dx.
\end{align*}
According to this and \eqref{t1-2-6}, firstly letting $m \to \infty$, then $\delta \downarrow 0$ we
can prove \eqref{t1-2-5}.

{\bf Step 2} It remains to prove that
\begin{align}\label{t1-2-7}
\lim_{m \to \infty}\e_m^{-(\alpha-1)}\sum_{j=1}^d\int_{\R^d}b_j\left(\frac{x}{\e_m};\w\right)\frac{\partial g(x)}{\partial x_j}u_{\lambda,f}^{\e_m,\w}(x)dx=0,
\ \forall\ g\in C_c^2(\R^d).
\end{align}
Without loss of generality, we assume that ${\rm supp}g\subset B(0,R_0)$ for some $R_0>0$.


By Assumption \ref{a1-2} we have $H_{jl}(\cdot;\w)\in W^{1,2}_{loc}(\R^d)$ for $H_{jl}(x;\w):=\tilde H_{jl}(\tau_x \w)$.
Since ${\rm supp}g\subset B(0,R_0)$, we choose a function $h\in C_c^2(\R^d)$ such that
$h(x)=1$ for every $x\in B(0,R_0)$ and ${\rm supp}h \subset B(0,2R_0)$. Based on this it holds that
\begin{align}\label{t1-2-7a}
\e_m^{-(\alpha-1)}\int_{\R^d}b_j\left(\frac{x}{\e},\w\right)\frac{\partial g(x)}{\partial x_j}u_{\lambda,f}^{\e_m,\w}(x)dx&=
\e_m^{2-\alpha}\sum_{l=1}^d\int_{\R^d}  \frac{\partial}{\partial x_l}\left( H_{jl}^{\e_m}\left(\cdot;\w\right)h(\cdot)\right)(x)
\frac{\partial g(x)}{\partial x_j}u_{\lambda,f}^{\e_m,\w}(x)dx,
\end{align}
where $H_{jl}^{\e_m}(x;\w):=H_{jl}\left(\frac{x}{\e_m};\w\right)$.

So by Parseval's identity we get for every $1\le j,l\le d$,
\begin{align*}
&\quad \e_m^{2-\alpha}\left|\int_{\R^d}\frac{\partial}{\partial x_l}\left( H_{jl}^{\e_m}\left(\cdot;\w\right)h(\cdot)\right)(x)
\frac{\partial g(x)}{\partial x_j}u_{\lambda,f}^{\e_m,\w}(x)dx\right|\\
&=\e_m^{2-\alpha}\left|\int_{\R^d}i \xi_l \F\left(H_{jl}^{\e_m}\left(\cdot;\w\right)h(\cdot)\right)(\xi)
\cdot\F\left(\frac{\partial g(\cdot)}{\partial x_j}u_{\lambda,f}^{\e_m,\w}(\cdot)\right)(\xi)d\xi\right|\\
&\le c_{13}\e_m^{2-\alpha}\left(\int_{\R^d}|\xi|^\alpha\left|\F\left(\frac{\partial g(\cdot)}{\partial x_j}
u_{\lambda,f}^{\e_m,\w}(\cdot)\right)(\xi)\right|^2d\xi\right)^{1/2}\cdot
\left(\int_{\R^d}|\xi|^{2-\alpha}\left|\F\left(H_{jl}^{\e_m}\left(\cdot;\w\right)h(\cdot)\right)(\xi)\right|^2d\xi\right)^{1/2}.
\end{align*}
Then we have
\begin{align*}
&\quad\int_{\R^d}|\xi|^\alpha \left|\F\left(\frac{\partial g(\cdot)}{\partial x_j}
u_{\lambda,f}^{\e_m,\w}(\cdot)\right)(\xi)\right|^2d\xi\\
&\le c_{14}\int_{\R^d}\int_{\R^d}\frac{\left|\frac{\partial g(x+z)}{\partial x_j}
u_{\lambda,f}^{\e_m,\w}(x+z)-\frac{\partial g(x)}{\partial x_j}
u_{\lambda,f}^{\e_m,\w}(x)\right|^2}{|z|^{d+\alpha}}dzdx\\
&\le c_{15}\int_{\R^d}\int_{\R^d}\frac{|u_{\lambda,f}^{\e_m,\w}(x+z)-u_{\lambda,f}^{\e_m,\w}(x)|^2}{|z|^{d+\alpha}}
\left|\frac{\partial g(x+z)}{\partial x_j}\right|^2dzdx
+c_{15}\int_{\R^d}|u_{\lambda,f}^{\e_m,\w}(x)|^2\left(\int_{\R^d}\frac{\left|\frac{\partial g(x+z)}{\partial x_j}-\frac{\partial g(x)}{\partial x_j}\right|^2}{|z|^{d+\alpha}}dz\right)dx\\
&\le c_{16}\int_{\R^d}\int_{\R^d}\frac{|u_{\lambda,f}^{\e_m,\w}(x+z)-u_{\lambda,f}^{\e_m,\w}(x)|^2}{|z|^{d+\alpha}}
dzdx+
c_{15}\int_{\R^d}|u_{\lambda,f}^{\e_m,\w}(x)|^2dx.
\end{align*}
Hence according to \eqref{t1-1-1a} we obtain
\begin{equation}\label{t1-2-9}
\sup_{m\ge 1}\int_{\R^d}|\xi|^\alpha \left|\F\left(\frac{\partial g(\cdot)}{\partial x_j}
u_{\lambda,f}^{\e_m,\w}(\cdot)\right)(\xi)\right|^2d\xi<+\infty.
\end{equation}

Meanwhile it holds that
\begin{align*}
&\quad\int_{\R^d}|\xi|^{2-\alpha}\left|\F\left(H_{jl}^{\e_m}(\cdot;\w)h(\cdot)\right)(\xi)\right|^2d\xi\\
&=\int_{\R^d}\int_{\R^d}\frac{\left|H_{jl}\left(\frac{x}{\e_m};\w\right)h(x)-H_{jl}\left(\frac{y}{\e_m};\w\right)h(y)\right|^2}{|x-y|^{d+2-\alpha}}dxdy\\
&\le 2\int_{\R^d}\int_{\R^d}\frac{\left|H_{jl}\left(\frac{x}{\e_m};\w\right)-H_{jl}\left(\frac{y}{\e_m};\w\right)\right|^2|h(y)|^2}{|x-y|^{d+2-\alpha}}dxdy
+2\int_{\R^d}\int_{\R^d}\frac{|h(x)-h(y)|^2}{|x-y|^{d+2-\alpha}}\left|H_{jl}\left(\frac{x}{\e_m};\w\right)\right|^2dxdy\\
&\le c_{17}\int_{\R^d}\int_{\R^d}\frac{\left|H_{jl}\left(\frac{x}{\e_m};\w\right)-H_{jl}\left(\frac{y}{\e_m};\w\right)\right|^2|h(y)|^2}{|x-y|^{d+2-\alpha}}dxdy
+c_{17}\int_{\R^d}\frac{\left|H_{jl}\left(\frac{x}{\e_m};\w\right)\right|^2}{(1+|x|)^{d+2-\alpha}}dx,
\end{align*}
where the last step we have used the property(since $h\in C_c^\infty(\R^d)$)
\begin{align*}
\quad \int_{\R^d}\frac{\left|h(x)-h(y)\right|^2}{|x-y|^{d+2-\alpha}}dy
&\le \|\nabla h\|_\infty^2\cdot \int_{\{y\in \R^d; |y-x|\le 1\}}\frac{|y-x|^2}{|y-x|^{d+2-\alpha}}dz\\
&+4\|h\|_\infty^2 \cdot \int_{\{y\in \R^d;|y-x|>1\}}\frac{1}{|x-y|^{d+2-\alpha}}dz\le c_{16},\ \forall x\in B(0,3R_0),
\end{align*}
\begin{align*}
\int_{\R^d}\frac{\left|h(x)-h(y)\right|^2}{|x-y|^{d+2-\alpha}}dx
&\le \|h\|_\infty^2\cdot\left(\int_{\{y\in \R^d; |y|\le 2R_0\}}\frac{1}{|x-y|^{d+2-\alpha}}dy\right)\le c_{17}(1+|x|)^{-d-2+\alpha},\ \forall\ x\notin B(0,3R_0).
\end{align*}
By change of variable we obtain
\begin{align*}
&\quad \int_{\R^d}\int_{\R^d}\frac{\left|H_{jl}\left(\frac{x}{\e_m};\w\right)-H_{jl}\left(\frac{y}{\e_m};\w\right)\right|^2|h(y)|^2}{|x-y|^{d+2-\alpha}}dxdy\\
&\le
\e_m^{d-(2-\alpha)}\|h\|_\infty^2\int_{B\left(0,\frac{2R_0}{\e_m}\right)}\left(\int_{\R^d}\frac{\left|H_{jl}\left(x;\w\right)-H_{jl}\left(y;\w\right)\right|^2}{|x-y|^{d+2-\alpha}}dy\right)dx\\
&=\e_m^{-(2-\alpha)}\e_m^d\|h\|_\infty^2\int_{B\left(0,\frac{2R_0}{\e_m}\right)}
\left(\int_{\R^d}\frac{\left|\tilde H_{jl}(\tau_{x+z}\w)-\tilde H_{jl}(\tau_{x}\w)\right|^2}{|z|^{d+2-\alpha}}dz\right)dx.
\end{align*}
According to ergodic theorem we derive
\begin{equation}\label{t2-1-8}
\begin{split}
&\quad \lim_{m \to \infty}\left|B\left(0,\frac{2R_0}{\e_m}\right)\right|^{-1}\int_{B\left(0,\frac{2R_0}{\e_m}\right)}
\left(\int_{\R^d}\frac{\left|\tilde H_{jl}(\tau_{x+z}\w)-\tilde H_{jl}(\tau_{x}\w)\right|^2}{|z|^{d+2-\alpha}}dz\right)dx\\
&=\Ee\left[\int_{\R^d}\frac{\left|\tilde H_{jl}(\tau_z \w)-\tilde H_{jl}(\w)\right|^2}{|z|^{d+2-\alpha}}dz\right]
\le \|\tilde H_{jl}\|_{1-\frac{\alpha}{2}}^2,
\end{split}
\end{equation}
which implies immediately that
\begin{align*}
\int_{\R^d}\int_{\R^d}\frac{\left|H_{jl}\left(\frac{x}{\e_m};\w\right)-H_{jl}\left(\frac{y}{\e_m};\w\right)\right|^2|h(y)|^2}{|x-y|^{d+2-\alpha}}dxdy
\le c_{18}(\w)\e_m^{-(2-\alpha)},
\end{align*}
where we also use the fact $\|\tilde H_{jl}\|_{1-\frac{\alpha}{2}}^2\le\|\tilde H_{jl}\|_{1}^2<\infty$ and $c_{18}(\w)$
is a positive constant which may depend on $\w$.

Using ergodic theorem we have
\begin{align*}
\varlimsup_{m \to \infty}\int_{B(0,2R_0)}\frac{\left|H_{jl}\left(\frac{x}{\e_m};\w\right)\right|^2}{(1+|x|)^{d+2-\alpha}}dx
&\le c_{19}\varlimsup_{m \to \infty}\int_{B(0,2R_0)}\left|\tilde H_{jl}\left(\tau_{\frac{x}{\e_m}}\w\right)\right|^2dx
=c_{19}|B(0,2R_0)|\cdot\Ee[|\tilde H_{jl}|^2].
\end{align*}
And by direct computation we obtain
\begin{align*}
&\quad \int_{B(0,2R_0)^c}\frac{\left|H_{jl}\left(\frac{x}{\e_m};\w\right)\right|^2}{(1+|x|)^{d+2-\alpha}}dx
=\frac{1}{|B(0,1)|}\int_{B(0,2R_0)^c}\left(\int_{B(0,1)}\frac{\left|H_{jl}\left(\frac{x}{\e_m};\w\right)\right|^2}{(1+|x|)^{d+2-\alpha}}dy\right)dx\\
&\le c_{20}\int_{B(0,2R_0)^c}\int_{B(0,1)}\frac{\left|H_{jl}\left(\frac{x}{\e_m};\w\right)-H_{jl}\left(\frac{y}{\e_m};\w\right)\right|^2}{|x-y|^{d+2-\alpha}}dydx\\
&\quad +c_{20}\int_{B(0,1)}\left|H_{jl}\left(\frac{y}{\e_m};\w\right)\right|^2
\left(\int_{B(0,2R_0)^c}\frac{1}{(1+|x|)^{d+2-\alpha}}dx\right)dy\\
&\le c_{21}\e_m^{-(2-\alpha)}\e_m^d\int_{B\left(0,\frac{1}{\e_m}\right)}
\left(\int_{\R^d}\frac{\left|\tilde H_{jl}(\tau_{x}\w)-\tilde H_{jl}(\tau_{y}\w)\right|^2}{|x-y|^{d+2-\alpha}}dx\right)dy
+c_{21}\int_{B(0,1)}\left|\tilde H_{jl}\left(\tau_{\frac{y}{\e_m}}\w\right)\right|^2dy,
\end{align*}
where in the last step we have used the change of variable.
Therefore based on this estimate and applying the same arguments of \eqref{t2-1-8} we obtain
\begin{align*}
\int_{B(0,2R_0)^c}\frac{\left|H_{jl}\left(\frac{x}{\e_m};\w\right)\right|^2}{(1+|x|)^{d+2-\alpha}}dx
\le c_{22}(\w)\e_m^{-(2-\alpha)}.
\end{align*}
Hence combining all above estimates together yields that for every $m\ge 1$,
\begin{align*}
\int_{\R^d}|\xi|^{2-\alpha}\left|\F\left(H_{jl}^{\e_m}(\cdot;\w)h(\cdot)\right)(\xi)\right|^2d\xi
\le c_{23}(\w)\e_m^{-(2-\alpha)}.
\end{align*}
According to  this and \eqref{t1-2-9} together we derive
\begin{align*}
\e_m^{2-\alpha}\left|\int_{\R^d}\frac{\partial}{\partial x_l}\left( H_{jl}^{\e_m}\left(\cdot;\w\right)h(\cdot)\right)(x)
\frac{\partial g(x)}{\partial x_j}u_{\lambda,f}^{\e_m,\w}(x)dx\right|\le c_{24}(\w)\e_m^{1-\alpha/2}.
\end{align*}
This, along with \eqref{t1-2-7a} yields \eqref{t1-2-7} and we have finished the proof.

\end{proof}


\vskip 0.3truein
{\small
{\bf Xin Chen:}
   School of Mathematical Sciences, Shanghai Jiao Tong University, 200240 Shanghai, P.R. China.
   \newline Email: \texttt{chenxin217@sjtu.edu.cn}

\bigskip

{\bf Kun Yin:}
    School of Mathematical Sciences, Shanghai Jiao Tong University, 200240 Shanghai, P.R. China.
    \newline Email: \texttt{epsilonyk@sjtu.edu.cn}

\end{document}